\newtheorem{theorem}{Theorem}[section]
\theoremstyle{definition}
\newtheorem{definition}[theorem]{Definition}
\newtheorem{example}[theorem]{Example}
\theoremstyle{remark}
\newtheorem*{remark}{Remark}
\numberwithin{equation}{section}
\DeclareMathOperator{\EntOp}{\mathbf{Ent}}
\newcommand{\Ent}{\EntOp}
\newcommand{\CEP}[1]{\mbox{$\mathbb{C}^{#1}$}}
\newcommand{\C}{\mbox{$\mathbb{C}$}}
\newcommand{\Eo}{\mbox{$\mathcal{E}_{0}$}}
\newcommand{\Ep}{\mbox{$\mathcal{E}_{p}$}}
\newcommand{\Jp}{\operatorname{J}_p}
\newcommand{\ddcn}[1]{\mbox{$\left(dd^{c}#1\right)^{n}$}}
\begin{document}

\title{Generalization of Finite Entropy Measures in Kähler Geometry}

\author{Per \AA hag}\address{Department of Mathematics and Mathematical Statistics\\ Ume\aa \ University\\SE-901~87 Ume\aa \\ Sweden}\email{per.ahag@umu.se}

\author{Rafa\l\ Czy{\.z}}\address{Faculty of Mathematics and Computer Science, Jagiellonian University, \L ojasiewicza~6, 30-348 Krak\'ow, Poland}
\email{rafal.czyz@im.uj.edu.pl}

\keywords{Kähler manifolds, energy classes, $p$-entropy}
\subjclass[2020]{Primary 32W20, 32Q15; Secondary 32U05, 46E30, 35A23}

\begin{abstract}
In this paper, we extend the concept of finite entropy measures in Kähler geometry. We define the finite $p$-entropy related to $\omega$-plurisubharmonic functions and demonstrate their inclusion in an appropriate energy class. Our study is anchored in the analysis of finite entropy measures on compact Kähler manifolds, drawing inspiration from fundamental works of Di Nezza, Guedj, and Lu. Utilizing a celebrated result by Darvas on the existence of a Finsler metric on the energy classes, we conclude this paper with a stability result for the complex Monge-Ampère equation.
\end{abstract}

\maketitle

\bigskip

\section{Introduction}

\bigskip

The concept of entropy, fundamental to understanding uncertainty in probability distributions, is defined for a discrete measure $P$ over outcomes $\{x_1, x_2, \ldots, x_n\}$ as
\[
H(P) = -\sum_{i=1}^{n} P(x_i) \log_b P(x_i),
\]
where $P(x_i)$ is the probability of outcome $x_i$. This measure is crucial in quantifying a system's randomness or unpredictability and broad applications across data compression, cryptography, machine learning, statistical mechanics, and information theory. Notably, the finite entropy of probability measures also finds significant application in complex geometry, influencing the search  of certain metrics.

In this paper, we draw inspiration from the work of Di Nezza, Guedj, and Lu~\cite{DNGL}, who have established significant results concerning compact K\"ahler manifolds $(X, \omega)$. In their study, $\omega$ represents a normalized K\"ahler form on $X$, which has complex dimension $n \geq 2$. They focused on analyzing the probability measure $\mu = (dd^c u + \omega)^n = f \omega^n$, having finite entropy, as defined by the integral:
\begin{equation}\label{Intr_entr}
\int_X f(\log f) \omega^n < \infty.
\end{equation}
In this context, they proved that $u \in \mathcal{E}_{\frac{n}{n-1}}$, referring to Cegrell's class $\mathcal{E}_{\frac{n}{n-1}}$ of $\omega$-plurisubharmonic functions with $\frac{n}{n-1}$--energy (see~\eqref{Def Ep} for the definition). Their inspiration was drawn from the result~\cite[Theorem 2.17]{BBEGZ}, which proved that all $\omega$-plurisubharmonic functions with finite entropy belong to the Cegrell class $\mathcal{E}_1$.

\medskip

Building on this foundation, we aim to prove the following generalization.

\bigskip

\noindent {\bf Theorem~A.} \emph{Let $u$ be an $\omega$-plurisubharmonic function
on a connected and compact K\"ahler manifold $(X,\omega)$, with $\omega$ being  a normalized K\"ahler form on $X$ of complex dimension $n \geq 2$, and $\sup_X u=-1$. Let $0<p<n$. If the probability measure $\mu = (dd^c u + \omega)^n = f\omega^n$ has finite $p$-entropy, i.e.,
\begin{equation}\label{Pentr_I}
\int_X (1+f)(\log (1+f))^p \omega^n < \infty,
\end{equation}
then $u$ belongs to $\mathcal{E}_{\frac{np}{n-p}}$. If additionally,
\begin{equation}\label{Pentr_II}
\int_X (1+f)(\log (1+f))^p \omega^n < A,
\end{equation}
for a constant $A > 0$, then there are constants $c, C > 0$ depending only on $n, p, A$ such that any probability measure $\mu = (dd^c u + \omega)^n = f \omega^n$ with uniform bound of its $p$-entropy in the sense of~\eqref{Pentr_II} satisfies:
\[
\int_X \exp\left(c(n,p,A)(-u)^{\frac{n}{n-p}}\right) \omega^n < C(n,p,A)\quad \text{and} \quad e_{\frac{np}{n-p}}(u) < C(n,p,A).
\]}

\medskip

Directly mimicking~\eqref{Intr_entr} to define the $p$-entropy is not feasible, as we need to circumvent situations where $\log f < 0$ to enable taking the power $p$ for any $p > 0$. Consequently, we adopt condition~\eqref{Pentr_I} in place of~\eqref{Intr_entr}. It is crucial to note that the space of probability measures with finite entropy remains consistent, whether we use condition~\eqref{Intr_entr}, or~\eqref{Pentr_I} with $p=1$.

\bigskip

Our main work of proving Theorem~A will be conducted in the local context in a bounded hyperconvex domain in $\mathbb{C}^n$, $n\geq 2$, and our methods use an interplay of the Cegrell techniques in pluripotential theory and Orlicz spaces from functional analysis. Our main work to prove Theorem~A is the following local version (Theorem~B). Fix $p>0$. We say that a plurisubharmonic function $u$ belongs to $\Ent_p$ if $u$ is in the Cegrell class $\mathcal F$ with the measure $(dd^cu)^n=fdV_{2n}$ having
finite $p$-entropy
\[
\int_{\Omega}(1+f)(\log (1+f))^p dV_{2n}<\infty.
\]
Moreover for $A>0$ by $\Ent_p^A$ denote the class of $u \in \Ent_p$ such that
\[
\int_{\Omega}(1+f)(\log (1+f))^p dV_{2n}\leq A.
\]
Proceeding, we present Theorem~B.

\bigskip

\noindent {\bf Theorem~B.} \emph{Let $\Omega$ be a bounded hyperconvex domain in  $\mathbb{C}^n$, $n\geq 2$ and let $0 < p < n$. Then
\[
\Ent_p \subset \mathcal{E}_{\frac{np}{n-p}}.
\]
Furthermore, for $A > 0$, there exist constants $c, C > 0$ depending only on $n, p, A$ such that for any $u \in \Ent_p^A$,
\[
\int_{\Omega} \exp\left(c(n,p,A)(-u)^{\frac{n}{n-p}}\right) dV_{2n} < C(n,p,A).
 \quad \text{and} \quad
 e_{\frac{np}{n-p}}(u) < C(n,p,A)
\]
For $p \geq n$, it holds that $\Ent_p \subset \mathcal{E}_{q}$ for all $q > 0$.
}

\medskip

The case $p>n$ in Theorem~B is due to Ko\l odziej~\cite{Ko96}. Finally, we would like to mention that using non-pluripotential methods, Wang, Wang, and Zhou in~\cite{WWZ} proved the analog of Theorem~B in the special case of smooth functions.

\bigskip

This note is organized as follows: Section~\ref{Sec Back} provides background on Orlicz Spaces and the Cegrell Classes. In Section~\ref{ThmAB}, we present the proofs of Theorems A and B. Finally, we conclude this note in Section~\ref{Sec_Stability} by applying Theorem~A and Theorem~B to prove stability results for the solution to the complex Monge-Ampère equation both on hyperconvex domains in $\C^n$ (Theorem~\ref{thm_stability}) and on compact Kähler manifolds (Theorem~\ref{thm_stability_cKm}).

\section{Background on Orlicz Spaces and the Cegrell Classes}\label{Sec Back}
This section establishes the foundational concepts of Orlicz Spaces and the Cegrell Classes. Orlicz Spaces, extending classical $L^p$ spaces, facilitate the analysis of functions with variable growth rates. The development of these spaces is well-documented in~\cite{M89}. Similarly, the Cegrell Classes, introduced in~\cite{C98,C04,C08}, provide a function spaces approach to studying the complex Monge-Ampère equation and its application. Additional insights into pluripotential theory can be found in~\cite{Czy09,GZbook,KlimekBook}. This section reviews these essential concepts, serving as a primer for their application and interrelation in subsequent sections.

Consider an increasing, continuous, convex function $\varphi: [0, \infty) \to [0, \infty)$ satisfying the following properties:
\[
\varphi(0) = 0, \quad\lim_{t \to 0^+} \frac{\varphi(t)}{t} = 0,\; \text{and}\quad\lim_{t \to \infty} \frac{\varphi(t)}{t} = \infty.
\]
Let $(T,\frak M,\mu)$ be a measurable space, where $T$ is an abstract space, $\frak M$ is a $\sigma$-algebra and $\mu$ is a $\sigma$-finite, positive, complete and atomless measure. Later in the paper we shall consider two cases: the first when $T$ is a bounded domain $\Omega$ in $\mathbb C^n$ with Lebesgue measure $dV_{2n}$, and the second when $T$ is compact K\"ahler manifold $(X,\omega)$ with the volume form $\omega^n$.

Let $Y$ denote the space of measurable functions with respect to the measure $\mu$ on $T$. The modular function $\rho$ on $Y$ is defined by
\[
\rho(f) = \int_{T} \varphi(|f|) d\mu.
\]
Define the Orlicz class $L_0^{\varphi}$ as
\[
L_0^{\varphi} = \left\{ f \in Y: \rho(f) < \infty \right\},
\]
and the Orlicz class $L_a^{\varphi}$ as
\[
L_a^{\varphi} = \left\{ f \in Y : \rho(\lambda f) < \infty \text{ for every } \lambda > 0 \right\}.
\]
The Orlicz space $L^{\varphi}$ is the smallest vector space containing $L_0^{\varphi}$. Moreover, $L_a^{\varphi}$ is the largest vector space containing $L_0^{\varphi}$ that is closed in $L^{\varphi}$.

Convergence in the norm in $L^{\varphi}$, indicated by $f_j \to f$ as $j \to \infty$, means that for all $\lambda > 0$,
\[
\int_{T} \varphi(\lambda|f_j - f|) d\mu \to 0 \quad \text{ as } j \to \infty.
\]
Let us recall also so called modular convergence, which means that there exist a constant $\lambda > 0$,
for which
\[
\int_{T} \varphi(\lambda|f_j - f|) d\mu \to 0 \quad \text{ as } j \to \infty.
\]
It was proved in~\cite{MO59} that, under the assumption that $\varphi$ satisfies so called condition $\Delta_2$ that there is a constant $C > 0$ such that $\varphi(2t) \leq C\varphi(t)$ for all $t$, norm convergence, and modular convergence are equivalent.

For increasing, continuous, unbounded functions $\alpha$ and $\beta$ with $\alpha(0) = \beta(0)$, we have:
\begin{enumerate}
\item $L_0^{\alpha} \subset L_0^{\beta}$ if and only if $\limsup_{t \to \infty} \frac{\beta(t)}{\alpha(t)} < \infty$;
\item $L^{\alpha} \subset L^{\beta}$ if and only if there exists some $\lambda > 0$ such that $\limsup_{t \to \infty} \frac{\beta(\lambda t)}{\alpha(t)} < \infty$.
\end{enumerate}

$L^{\varphi}$ is a Fréchet space. If $\varphi$ is convex, then $L^{\varphi}$ is a Banach space with the Orlicz norm
\[
\|f\|_{\varphi}^0 = \sup \left\{ \int_{T} |fg| d\mu : \int_{T} \varphi(|g|) d\mu \leq 1 \right\}
\]
or the equivalent Luxemburg norm
\[
\|f\|_{\varphi} = \inf \left\{ \lambda > 0 : \int_{T} \varphi\left(\frac{|f|}{\lambda}\right) d\mu\leq 1 \right\}.
\]
Given $\varphi$ as above, we can represent it as
\[
\varphi(t) = \int_0^t p(s) \, ds,
\]
where $p$ is a function related to $\varphi$. The right-inverse of $p$, denoted by $q(t)$, is defined as $q(t) = \sup\{s : p(s) \leq t\}$. The complementary function, $\varphi^*(s)$, important in Orlicz space theory, is then given by
\[
\varphi^*(s) = \int_0^s q(t) \, dt.
\]
This integral represents $\varphi^*(s)$, and it is essential in defining the dual space in Orlicz space theory, fulfilling the role of the Legendre transform of $\varphi$ in this context. We introduce Young's inequality as follows.

\begin{theorem}\label{yi}
Under the assumptions that $\varphi: [0, \infty) \to [0, \infty)$ is increasing, continuous, and satisfies $\varphi(0) = 0$ and $\lim_{t \to \infty} \frac{\varphi(t)}{t} = \infty$, for all $t, s \geq 0$, we have
\[
st \leq \varphi(t) + \varphi^*(s).
\]
Furthermore,
\begin{equation}\label{star}
\varphi^*(s) = \sup_{t \geq 0} (st - \varphi(t)), \quad \text{and} \quad \varphi(t) = \sup_{s \geq 0} (st - \varphi^*(s)).
\end{equation}
\end{theorem}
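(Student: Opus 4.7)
The plan is to use the integral representations $\varphi(t) = \int_0^t p(s)\,ds$ and $\varphi^*(s) = \int_0^s q(t)\,dt$ already introduced in the excerpt and to prove Young's inequality by a geometric area argument in the first quadrant, then derive the two Legendre identities in~\eqref{star} as a consequence.

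For the inequality I would introduce the planar sets
\[
A_1 = \{(u,v) : 0 \leq u \leq t,\ 0 \leq v \leq p(u)\}, \qquad A_2 = \{(u,v) : 0 \leq v \leq s,\ 0 \leq u \leq q(v)\},
\]
whose two-dimensional Lebesgue measures are, by Fubini, exactly $\varphi(t)$ and $\varphi^*(s)$. The crucial point is that the rectangle $R = [0,t] \times [0,s]$ is covered by $A_1 \cup A_2$: for $(u,v) \in R$ with $v > p(u)$ one has $u \in \{w : p(w) \leq v\}$, hence $u \leq q(v)$, so $(u,v) \in A_2$. Comparing measures yields $st \leq \varphi(t) + \varphi^*(s)$.

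For the first identity in~\eqref{star}, Young immediately gives $\sup_{t \geq 0}(st - \varphi(t)) \leq \varphi^*(s)$. For the reverse, I would specialise to $t = q(s)$: since $p(u) \leq s$ for $u \in [0, q(s)]$,
\[
s\,q(s) - \varphi(q(s)) = \int_0^{q(s)}(s - p(u))\,du = \bigl|\{(u,v): 0 \leq u \leq q(s),\ p(u) \leq v \leq s\}\bigr|,
\]
and this last set coincides, up to a planar null set (the graph of the monotone function $p$), with $A_2$, whose measure is $\varphi^*(s)$. The dual identity $\varphi(t) = \sup_{s \geq 0}(st - \varphi^*(s))$ then follows by symmetry: since $q$ is the right-inverse of $p$ and $p$ agrees with the right-inverse of $q$ off a countable set, the analogous geometric argument with the roles of the two axes exchanged applies verbatim.

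The principal obstacle is not the area computation itself but the careful bookkeeping when $p$ has jumps or plateaus, in which case $q$ behaves likewise and the two candidate regions used in the Fubini step differ by a monotone graph. Such graphs always have planar Lebesgue measure zero, so the integral identities are unaffected, and once this technicality is dispatched the geometric picture carries the entire argument.
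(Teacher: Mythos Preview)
The paper does not supply its own proof of this theorem; it is stated as a standard background result from Orlicz space theory, with the monograph~\cite{M89} serving as the implicit reference. Your proposal gives the classical geometric area argument, and it is correct: the covering $R \subset A_1 \cup A_2$ is exactly the right observation, and your treatment of the equality case via $t = q(s)$ together with the remark that monotone graphs are planar null sets handles the jump/plateau technicalities adequately.

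One small caveat worth flagging: the theorem as written omits the convexity hypothesis on $\varphi$, but the integral representation $\varphi(t) = \int_0^t p(u)\,du$ that both the paper's preamble and your argument rely on presupposes it. Without convexity the second identity in~\eqref{star} fails in general (one recovers only the convex envelope $\varphi^{**}$), so your appeal to symmetry for the biduality $\varphi = (\varphi^*)^*$ tacitly uses convexity. Since the surrounding text in the paper makes this assumption explicit, there is no actual gap, but it would be cleaner to state it.
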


Next, we present a counterpart of the H\"older inequality.

\begin{theorem}\label{orlicz_holder}
For $f \in L^{\varphi}$ and $g \in L^{\varphi^*}$, it holds that
\[
\left| \int_{T} f g \, d\mu \right| \leq \|f\|^0_{\varphi} \|g\|_{\varphi^*},
\]
and
\[
\left| \int_{T} f g \, d\mu \right| \leq \|f\|_{\varphi} \|g\|^0_{\varphi^*}.
\]
\end{theorem}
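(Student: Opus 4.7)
The plan is to deduce both inequalities from Young's inequality (Theorem~\ref{yi}) and the definitions of the two norms, after a routine homogeneity reduction. If any of $\|f\|^0_\varphi$, $\|f\|_\varphi$, $\|g\|^0_{\varphi^*}$, $\|g\|_{\varphi^*}$ is zero then the corresponding function vanishes $\mu$-a.e.\ and both sides are zero, while if any is $\infty$ the bound is vacuous; so I may assume all four quantities are finite and strictly positive. The proof then hinges on a single auxiliary fact, namely that the Luxemburg infimum is attained:
\[
\int_T \varphi^{\ast}\!\left(\frac{|g|}{\|g\|_{\varphi^*}}\right) d\mu \le 1,
\]
with the analogous statement for $f$ and $\varphi$. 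I would establish this by monotone convergence along a sequence $\lambda_j \downarrow \|g\|_{\varphi^*}$, noting that $\varphi^*$ is continuous and increasing in view of its characterization~\eqref{star} as a supremum of affine functions of $s$.

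With attainment in hand, the first inequality is immediate. Setting $\lambda = \|g\|_{\varphi^*}$, the function $h := g/\lambda$ is admissible in the supremum defining $\|f\|^0_\varphi$, so
\[
\int_T |f|\,|h| \, d\mu \le \|f\|^0_\varphi,
\]
and multiplying through by $\lambda$ together with $\bigl|\int fg\,d\mu\bigr| \le \int |fg|\,d\mu$ gives $\bigl|\int fg\,d\mu\bigr| \le \|f\|^0_\varphi\,\|g\|_{\varphi^*}$. The second inequality follows by the identical argument with the roles of $f$ and $g$, and of $\varphi$ and $\varphi^*$, interchanged, invoking $(\varphi^*)^*=\varphi$ from~\eqref{star}: setting $\lambda = \|f\|_\varphi$, the function $f/\lambda$ is admissible in the supremum defining $\|g\|^0_{\varphi^*}$, and the same scaling yields $\bigl|\int fg\,d\mu\bigr| \le \|f\|_\varphi\,\|g\|^0_{\varphi^*}$.

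The main (and really only substantive) obstacle is the attainment in the Luxemburg norm, which rests on continuity and monotonicity of $\varphi$ and $\varphi^*$. Young's inequality enters this streamlined proof only implicitly, through its role in making the Orlicz-norm supremum finite for every $f \in L^\varphi$ and thus making both bounds non-vacuous. A more direct but slightly cruder route would be to use Young pointwise in the form $|fg| \le \varphi(\alpha |f|) + \varphi^{\ast}(|g|/\alpha)$, integrate, and optimize in $\alpha > 0$, obtaining the inequalities up to a harmless multiplicative constant that one then removes by appealing to the supremum defining $\|\cdot\|^0_\varphi$.
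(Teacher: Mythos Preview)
The paper does not prove Theorem~\ref{orlicz_holder}; it is quoted without proof as standard background from Orlicz-space theory, so there is no argument to compare against. Your proof is the textbook one and is correct: attainment in the Luxemburg infimum follows by monotone convergence exactly as you describe, and then the normalized function is an admissible competitor in the supremum defining the Orlicz norm, giving both inequalities after the duality $(\varphi^*)^*=\varphi$ from~\eqref{star}.

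One remark worth flagging. The paper's displayed definition of $\|f\|_\varphi^0$ appears to contain a misprint: the constraint in the supremum should read $\int_T \varphi^{*}(|g|)\,d\mu \le 1$ (the complementary function), not $\int_T \varphi(|g|)\,d\mu \le 1$. Your proof tacitly uses the standard definition at the step where you declare $h=g/\|g\|_{\varphi^*}$ admissible in that supremum, since what you have established is $\int_T \varphi^{*}(|h|)\,d\mu \le 1$. With the paper's definition as literally written that step would not go through; with the standard definition (which is surely what is intended, and is what makes the two norms equivalent as the paper asserts) your argument is exactly right.
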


We are interested in the case where the Orlicz space corresponds to the $p$-entropy class. Recall that in dealing with the entropy class for $\varphi_1(t) = (1+t)\log(1+t)$, then $\varphi_1^*(s)$ is essentially equal to $e^s$. We aim to generalize this to the $p$-entropy class. We will need the following example in the proof of Theorem~A and~B.

\begin{example}\label{ex}
Consider the function defined by
\[
\Phi_p(t) = \varphi_p(t) - \psi_p(t) = (1 + t)(\log(1 + t))^p - p \int_0^t (\log(1 + x))^{p-1} \, dx.
\]
Note that $\Phi_p$ is a continuous, increasing, and convex function with the properties that $\Phi_p(0) = 0$, $\lim_{t \to \infty} \frac{\Phi_p(t)}{t} = \infty$, $\lim_{t \to 0^+} \frac{\Phi_p(t)}{t} = 0$, and
\[
\lim_{t \to \infty} \frac{\Phi_p(t)}{\varphi_p(t)} = 1.
\]
Consequently, we have $L_0^{\varphi_p} = L_0^{\Phi_p}$, and  $L^{\varphi_p} = L^{\Phi_p}$.

Note that $\varphi_p(2t)\leq 2^{p+1}\varphi_p(t)$ so it satisfies $\Delta_2$ condition. Moreover since
\[
\lim_{t \to 0} \frac{\Phi_p(2t)}{\Phi_p(t)} = 2^{p+1}\ \text{and} \ \lim_{t \to \infty} \frac{\Phi_p(2t)}{\Phi_p(t)} = 2,
\]
then $\Phi_p$ satisfies $\Delta_2$ condition too. Finally let us mention that as a consequence we get
$L^{\Phi_p}=L_0^{\Phi_p}=L_a^{\Phi_p}$.

 We shall now consider the Legendre transform $\Phi_p^*$ of $\Phi_p$. The supremum of the function $t\mapsto st - \Phi_p(t)$ is achieved at $t = \exp\left(s^{\frac{1}{p}}\right) - 1$, thus
\[
\Phi^*_p(s) = p \int_0^{\exp(s^{\frac{1}{p}}) - 1} (\log(1 + x))^{p-1} \, dx - s.
\]
Further, considering the derivative,
\[
(\Phi^*_p(s))' = \exp(s^{\frac{1}{p}}) - 1,
\]
and applying the Lagrange mean value theorem, we obtain
\begin{equation}\label{e0}
\begin{aligned}
\Phi^*_p(s)  & = \Big|\Phi^*_p(s) - \Phi^*_p(0)\Big|
 \leq \sup_{t \in [0,s]} \left|\exp(t^{\frac{1}{p}}) - 1\right|s
 \leq \left(\exp(s^{\frac{1}{p}}) - 1\right)s \\
 & \leq \exp((\frac pe + 1)s^{\frac{1}{p}})
  = \exp\left(c_p s^{\frac{1}{p}}\right),
 \end{aligned}
\end{equation}
where $c_p = \frac pe + 1$. \hfill{$\Box$}
\end{example}

With the groundwork laid in Orlicz spaces, we next present the necessary definitions and facts of the  Cegrell classes.

\begin{definition}\label{prel_hcx}  A set $\Omega\subseteq\CEP{n}$, $n\geq 1$, is a bounded hyperconvex domain if it is a bounded, connected, and open set such
 that there exists a bounded plurisubharmonic function $u:\Omega\rightarrow (-\infty,0)$ such that the closure of the
 set $\{z\in\Omega : u(z)<c\}$ is compact in $\Omega$, for every $c\in (-\infty, 0)$.
\end{definition}

\begin{definition}\label{ep}
We say that a plurisubharmonic function $u$ defined on $\Omega$ belongs to $\Eo$ $(=\Eo
(\Omega))$ if it is bounded function,
\[
\lim_{z\rightarrow\xi} u (z)=0 \quad \text{ for every } \xi\in\partial\Omega\, ,
\]
and
\[
\int_{\Omega} \ddcn{u}<\infty\, ,
\]
where $\ddcn{\cdot}$ is the complex Monge-Amp\`{e}re operator. Furthermore, we say that $u\in \Ep$ $(=\Ep(\Omega))$, $p>0$, if $u$ is a plurisubharmonic function
defined on $\Omega$ and there exists a decreasing sequence, $\{u_{j}\}$, $u_{j}\in\Eo$, that converges pointwise to $u$ on $\Omega$,
as $j$ tends to $\infty$, and
\[
\sup_{j} e_p(u_j)=\sup_{j}\int_{\Omega}
(-u_{j})^{p}\ddcn{u_{j}}< \infty\, .
\]
Finally, we say that $u\in \mathcal F$ $(=\mathcal F(\Omega))$, if $u$ is a plurisubharmonic function
defined on $\Omega$ and there exists a decreasing sequence, $\{u_{j}\}$, $u_{j}\in\Eo$, that converges pointwise to $u$ on $\Omega$,
as $j$ tends to $\infty$, and
\[
\sup_{j}\int_{\Omega}\ddcn{u_{j}}< \infty\, .
\]
\end{definition}

We shall need on the following inequality. This theorem  was proved in~\cite{P99} for $p\geq 1$, and for $0<p<1$
in~\cite{ACP07} (see also~\cite{C98,CP97}).

\begin{theorem}\label{thm_holder} For $p > 0$ and functions $u_0, u_1, \ldots, u_n \in \mathcal{E}_p$ with $n \geq 2$, there exists a constant $d(p, n, \Omega) \geq 1$ dependent only on $p$, $n$, and $\Omega$, such that
\begin{multline*}
\int_\Omega (-u_0)^p dd^c u_1\wedge\cdots\wedge dd^c u_n\\ \leq
d(p,n,\Omega)\; e_p(u_0)^{p/(p+n)}e_p(u_1)^{1/(p+n)}\cdots
e_p(u_n)^{1/(p+n)}\, .
\end{multline*}
\end{theorem}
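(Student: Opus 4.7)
The plan is to prove Theorem~\ref{thm_holder} by first reducing to the bounded case $u_j \in \mathcal{E}_0$ and then establishing two key sub-estimates in succession: a Cauchy--Schwarz-type inequality for closed positive currents weighted by $(-u_0)^p$, and a pure-form version of the target inequality in which only one function enters the mixed product. For $u_j \in \mathcal{E}_p$ choose defining decreasing sequences $u_j^k \searrow u_j$ with $u_j^k \in \mathcal{E}_0$ and $\sup_k e_p(u_j^k) < \infty$; Cegrell's convergence theorems then give weak convergence of the mixed Monge--Amp\`ere currents while $e_p$ is lower semicontinuous, so it suffices to prove the inequality for $u_0, \ldots, u_n \in \mathcal{E}_0$.

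For bounded psh functions vanishing on $\partial \Omega$, the main tool is the weighted Cauchy--Schwarz inequality for closed positive $(n-2,n-2)$-currents $T$ on $\Omega$: for $i \neq k$,
\[
\int_\Omega (-u_0)^p\, dd^c u_i \wedge dd^c u_k \wedge T \;\leq\; \Big(\int_\Omega (-u_0)^p (dd^c u_i)^2 \wedge T\Big)^{1/2}\Big(\int_\Omega (-u_0)^p (dd^c u_k)^2 \wedge T\Big)^{1/2}.
\]
This follows, after a smooth approximation, by viewing $(\alpha,\beta) \mapsto \int (-u_0)^p\, \alpha \wedge \beta \wedge T$ as a positive semidefinite bilinear form on $(1,1)$-forms. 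Iterating this step $O(n^2)$ times to symmetrize the mixed product $dd^c u_1 \wedge \cdots \wedge dd^c u_n$ reduces the problem to controlling, for each $j$, the pure integral $\int_\Omega (-u_0)^p (dd^c u_j)^n$, with each $u_j$ entering the final estimate with total exponent $1/n$ inside the resulting geometric mean.

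For the remaining pure estimate I would use a parameter trick: for $t > 0$ set $w_t = u_0 + t u_j \in \mathcal{E}_0$. The pointwise inequalities $-w_t \geq -u_0 \geq 0$ and $(dd^c w_t)^n \geq t^n (dd^c u_j)^n$ imply
\[
t^n \int_\Omega (-u_0)^p (dd^c u_j)^n \;\leq\; e_p(w_t).
\]
Combining with the upper bound $e_p(w_t) \leq C(n,p)\bigl(e_p(u_0) + t^{n+p}\, e_p(u_j)\bigr)$, which follows from $(-w_t)^p \leq 2^{p-1}((-u_0)^p + t^p (-u_j)^p)$ together with inductive control of the mixed terms in the expansion of $(dd^c w_t)^n$, and optimizing $t = (e_p(u_0)/e_p(u_j))^{1/(n+p)}$, I recover the exponents $p/(n+p)$ and $n/(n+p)$. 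Substituting this back into the symmetrization step reproduces the full inequality with the claimed symmetric constants.

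The main obstacle is the range $0 < p < 1$, where $s \mapsto s^p$ is concave: the inequality $(-w_t)^p \leq 2^{p-1}((-u_0)^p + t^p(-u_j)^p)$ reverses, and weighting by $(-u_0)^p$ disrupts the positivity arguments underlying the weighted Cauchy--Schwarz step. Following \cite{ACP07}, I would handle this by approximating $s \mapsto s^p$ by smooth convex majorants $\chi_\varepsilon$ coinciding with $s^p$ on $[0, \sup_\Omega(-u_0)]$ and with uniformly controlled derivatives, running the $p \geq 1$ machinery on the weight $\chi_\varepsilon(-u_0)$, and passing to the limit. The final constant $d(p,n,\Omega)$ thereby inherits its $\Omega$-dependence through the $L^\infty$-bound of elements of $\mathcal{E}_0$ used in this approximation.
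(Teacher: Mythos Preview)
The paper does not actually prove Theorem~\ref{thm_holder}; it quotes the statement and attributes the proof to \cite{P99} for $p\geq 1$ and to \cite{ACP07} for $0<p<1$. So there is no in-paper argument to compare against, but your proposal has a genuine gap at its central step.

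The weighted Cauchy--Schwarz inequality you invoke is false as stated. The pairing $(\alpha,\beta)\mapsto\int_\Omega(-u_0)^p\,\alpha\wedge\beta\wedge T$ is \emph{not} positive semidefinite on real $(1,1)$-forms: already in $\mathbb C^2$, with $\alpha=i\,dz_1\wedge d\bar z_1$ and $\beta=i\,dz_2\wedge d\bar z_2$, one has $\alpha^2=\beta^2=0$ while $\alpha\wedge\beta$ is a positive multiple of the volume form, so the right-hand side of your inequality vanishes while the left-hand side is strictly positive. Thus no symmetrisation of the mixed product can be carried out this way; log-concavity of mixed Monge--Amp\`ere masses in fact points in the \emph{opposite} direction (Alexandrov--Fenchel/Khovanskii--Teissier). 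The proofs in \cite{C98,CP97,P99} do use a Cauchy--Schwarz step, but only \emph{after} an integration by parts that converts $\int(-u)\,dd^cv\wedge T$ into $\int du\wedge d^cv\wedge T$, where one genuinely has a Hermitian pairing of $(1,0)$-forms against a positive current; the weight $(-u_0)^p$ is then handled by expanding $dd^c\bigl((-u_0)^p\bigr)$, whose extra term $p(p-1)(-u_0)^{p-2}du_0\wedge d^cu_0$ has a favourable sign precisely when $p\geq 1$, which is why the range $0<p<1$ needs the separate treatment of \cite{ACP07}.

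There is also a circularity in your ``pure estimate'': the bound $e_p(w_t)\leq C(n,p)\bigl(e_p(u_0)+t^{n+p}e_p(u_j)\bigr)$ requires controlling every mixed term $\int_\Omega(-u_0)^a(-u_j)^{p-a}(dd^cu_0)^k\wedge(dd^cu_j)^{n-k}$ by $e_p(u_0)$ and $e_p(u_j)$, and that is exactly the content of the theorem you are trying to prove.
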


\section{Proof of Theorem~A and Theorem~B}\label{ThmAB}

In this section, we will present proofs of Theorem~A and Theorem~B, starting with the latter.

Let $\Omega$ be a bounded hyperconvex domain in $\mathbb{C}^n$. For $p>0$, define
\[
\varphi_p(t)=(1+t)(\log(1+t))^p,
\]
and note that if $f \in L^{\varphi_p}$, then
\[
\begin{aligned}
&\int_{\Omega}f dV_{2n} = \int_{\{z \in \Omega : f(z) \leq e\}}f dV_{2n} + \int_{\{z \in \Omega : f(z) > e\}}f dV_{2n} \\
&\leq eV_{2n}(\Omega) + \int_{\Omega}(1+f)(\log (1+f))^p dV_{2n} < \infty.
\end{aligned}
\]
Thus, $L^{\varphi_p} \subset L^1$, which means that the following definition is well-posed.

\begin{definition}
 We say that a plurisubharmonic function $u$ belongs to $\Ent_p$ if $u\in\mathcal{F}$ and $(dd^c u)^n = f dV_{2n}$ with
\begin{equation}\label{CondP}
\int_{\Omega} (1+f)(\log(1+f))^p dV_{2n} < \infty.
\end{equation}
Moreover, for $A > 0$, denote by $\Ent_p^A$ the subset of $\Ent_p$ with the additional requirement:
\[
\int_{\Omega} (1+f)(\log (1+f))^p \, dV_{2n} \leq A.
\]
\end{definition}
It is noteworthy that condition~\eqref{CondP} is equivalent to
\[
\int_{\Omega} f(|\log f|)^p dV_{2n} < \infty,
\]
however, we have chosen~\eqref{CondP} as it proves more convenient for our purposes, as demonstrated in Example~\ref{ex}. For information about the entropy in the relative setting, see e.g.~\cite{DNTT23}.

We shall need the following theorem proved in~\cite{DNGL}

\begin{theorem}\label{t2}
Let $\Omega$ be bounded hyperconvex domain and let $p>0$. Then for any $0<\gamma<\frac {2n(n+1)}{n+p}$ there exists constant $C_{\gamma}$ such that for all $u\in \mathcal E_p$ holds
\[
\int_{\Omega}\exp\left(\gamma e_p(u)^{-\frac 1n}(-u)^{\frac {n+p}{n}}\right)dV_{2n}\leq C_{\gamma}.
\]
\end{theorem}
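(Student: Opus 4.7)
The plan is to prove Theorem~\ref{t2} as a Moser--Trudinger-type inequality for the Cegrell class $\mathcal{E}_p$, via sublevel-set decay estimates obtained by coupling the Monge--Amp\`ere capacity to the Lebesgue measure.

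First, by homogeneity, I would normalize the function: replacing $u$ by $u/e_p(u)^{1/(n+p)}$ (which itself lies in $\mathcal{E}_p$ with unit $p$-energy) reduces the problem to showing that for every $u \in \mathcal{E}_p(\Omega)$ with $e_p(u) \leq 1$,
\[
\int_\Omega \exp\bigl(\gamma (-u)^{(n+p)/n}\bigr)\, dV_{2n} \leq C_\gamma, \qquad \gamma < \tfrac{2n(n+1)}{n+p}.
\]

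The core step is to establish, for each such $u$, a sublevel-set decay of the form
\[
V_{2n}(\{u<-t\}) \leq C\exp\bigl(-\beta\, t^{(n+p)/n}\bigr), \qquad t\geq 1,
\]
for every $\beta < \tfrac{2n(n+1)}{n+p}$. I would obtain this by chaining two classical ingredients. First, the capacity estimate
\[
\operatorname{Cap}_\Omega(\{u<-t\}) \leq \tfrac{C_1}{t^{n+p}}\, e_p(u),
\]
which follows from the comparison principle applied to the pair $(u,\, t\phi)$, where $\phi$ ranges over candidates with $-1\leq \phi \leq 0$ in the definition of the Bedford--Taylor capacity, together with the observation that $(-u/t)^p\geq 1$ on $\{u<-t\}$. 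Second, the Alexander--Taylor--Zeriahi inequality, valid in bounded hyperconvex domains,
\[
V_{2n}(K) \leq C_2\, \exp\bigl(-2n\, \operatorname{Cap}_\Omega(K)^{-1/n}\bigr).
\]
Composing these two ingredients yields the claimed volume decay.

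Finally, the layer-cake decomposition
\[
\int_\Omega \exp\bigl(\gamma(-u)^{(n+p)/n}\bigr)\, dV_{2n} = V_{2n}(\Omega) + \tfrac{\gamma(n+p)}{n}\int_0^\infty t^{p/n}\, e^{\gamma t^{(n+p)/n}}\, V_{2n}(\{u<-t\})\, dt
\]
converges whenever $\gamma < \beta$, delivering the theorem. The hardest part will be keeping track of the constants through the two estimates so that the threshold is exactly $\tfrac{2n(n+1)}{n+p}$: the factor $2n$ reflects the sharp Alexander--Taylor constant, while the factor $n+1$ should emerge from an iterative refinement of the capacity bound in which one successively trades $dd^c u$ for $dd^c \psi$ (with $(dd^c \psi)^n = dV_{2n}$) via integration by parts, distributing a power of $(-u)$ at each step and sharpening the exponent on $t$ by $1/n$ per iteration.
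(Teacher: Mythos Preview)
The paper does not actually prove Theorem~\ref{t2}; it is quoted verbatim from Di~Nezza--Guedj--Lu~\cite{DNGL} and used as a black box in the proof of Theorem~B. So there is no ``paper's own proof'' to compare against.

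Your outline is essentially the strategy used in~\cite{DNGL} (and in the local Moser--Trudinger work~\cite{AC19} of the same authors): normalize to $e_p(u)=1$, control $\operatorname{Cap}_\Omega(\{u<-t\})$ by $t^{-(n+p)}e_p(u)$, feed this into a volume--capacity inequality of Alexander--Taylor--Zeriahi type, and integrate by layer cake. The one point that deserves a warning is the constant. The crude capacity bound you write down, obtained by inserting $(-u/t)^p\geq 1$ on $\{u<-t\}$ and invoking Theorem~\ref{thm_holder}, carries the constant $d(p,n,\Omega)$ from that inequality and will \emph{not} by itself yield the threshold $\tfrac{2n(n+1)}{n+p}$. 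You correctly flag this, and your last sentence points to the right fix: in~\cite{DNGL} the sharp capacity constant is obtained not by a single H\"older-type estimate but by an $n$-step integration-by-parts iteration that successively replaces factors of $dd^cu$ by $dd^c\psi$ (with $\psi\in\mathcal{E}_0$ solving $(dd^c\psi)^n=dV_{2n}$), each step costing a controlled power of $t$. That iteration is what produces the factor $n+1$; without it you would only get the qualitative conclusion, not the stated range of $\gamma$. If you intend to actually write out the proof, that iteration is where the real work lies.
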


\bigskip

\noindent {\bf Theorem~B.} \emph{Let $\Omega$ be a bounded hyperconvex domain in  $\mathbb{C}^n$, $n\geq 2$ and let $0 < p < n$. Then
\[
\Ent_p \subset \mathcal{E}_{\frac{np}{n-p}}.
\]
Furthermore, for $A > 0$, there exist constants $c, C > 0$ depending only on $n, p, A$ such that for any $u \in \Ent_p^A$,
\[
\int_{\Omega} \exp\left(c(n,p,A)(-u)^{\frac{n}{n-p}}\right) dV_{2n} < C(n,p,A).
 \quad \text{and} \quad
 e_{\frac{np}{n-p}}(u) < C(n,p,A)
\]
For $p \geq n$, it holds that $\Ent_p \subset \mathcal{E}_{q}$ for all $q > 0$.
}

\begin{proof} Consider $u \in \Ent_p$ such that $(dd^c u)^n = f dV_{2n}$ and $\int_{\Omega} \varphi_p(f) dV_{2n} < \infty$. Define $f_j = \min(j,f)$. According to Example~\ref{ex}, we have
\[
\int_{\Omega}\Phi_p(f_j) dV_{2n} < \infty, \quad \text{and} \quad \int_{\Omega}\Phi_p(f) dV_{2n} < \infty.
\]
Moreover, we have $f_j dV_{2n} \leq (dd^c j^{\frac{1}{n}}v)^n$, where $v \in \mathcal{E}_0$ satisfies $(dd^c v)^n = dV_{2n}$. By Kołodziej's celebrated subsolution theorem (see~\cite{Ko05}), there exists a $u_j \in \mathcal{E}_0$ such that $(dd^c u_j)^n = f_j dV_{2n}$. The sequence $u_j$ decreases and converges pointwise to $u$ as $j \to \infty$.

Let $q, Q > 0$. Applying the Young inequality (Theorem~\ref{yi}) for $t = f_j$ and $s = \gamma e_q(u_j)^{-\frac{p}{n}}(-u_j)^{Q}$, and referencing (\ref{e0}), we obtain
\begin{multline}\label{1}
\int_{\Omega} \gamma e_q(u_j)^{-\frac{p}{n}}(-u_j)^{Q} f_j dV_{2n} \\
\leq \int_{\Omega} \Phi_p(f_j) dV_{2n} + \int_{\Omega} \Phi_p^*\left(\gamma e_q(u_j)^{-\frac{p}{n}}(-u_j)^{Q}\right) dV_{2n} \\
\leq \int_{\Omega} \Phi_p(f) dV_{2n} + \int_{\Omega} \exp\left(c_p \gamma^{\frac{1}{p}} e_q(u_j)^{-\frac{1}{n}}(-u_j)^{\frac{Q}{p}}\right) dV_{2n} < D,
\end{multline}
where $D$ is a constant.

Setting $Q = q = \frac{np}{n-p}$ and choosing $\gamma > 0$ such that $c_p \gamma^{\frac{1}{p}} < \frac{2n(n+1)}{n+p}$, we deduce from Theorem~\ref{t2} that the right-hand side of (\ref{1}) is finite and the constant $D$ dependent only on the $p$-entropy of $f$ and is independent of $u_j$. Consequently, the left-hand side of (\ref{1}) is also finite and equals
\begin{equation}\label{new}
\gamma \sup_j e_{\frac{np}{n-p}}(u_j)^{\frac{n-p}{n}} = \sup_j \int_{\Omega} \gamma e_q(u_j)^{-\frac{p}{n}}(-u_j)^{Q} f_j dV_{2n} < D < \infty,
\end{equation}
implying that $u \in \mathcal{E}_{\frac{np}{n-p}}$.

If we now assume that $u\in \Ent_p^A$, for some $A>0$, then passing to the limit in (\ref{new}) we obtain
\[
e_{\frac{np}{n-p}}(u)\leq (D\gamma^{-1})^{\frac{n}{n-p}}=C,
\]
where $C>0$ depends only on $n,p,A$ and $\Omega$.

Taking $c=\gamma C^{-\frac 1n}$ and using Theorem~\ref{t2} we also obtain
\[
\begin{aligned}
&\int_{\Omega}\exp\left(c(-u)^{\frac{n}{n-p}}\right)dV_{2n}=
\int_{\Omega}\exp\left(\gamma C^{-\frac 1n}(-u)^{\frac{n}{n-p}}\right)dV_{2n}\\
&\leq\int_{\Omega}\exp\left(\gamma e_{\frac{np}{n-p}}(u)^{-\frac{1}{n}}(-u)^{\frac{n}{n-p}}\right)dV_{2n}\leq C_{\gamma}.
\end{aligned}
\]
To conclude, note that if $p = n$, for any $q > 0$, we can define $p' = \frac{nq}{n+q} < n$. Thus, $q = \frac{np'}{n-p'}$ and consequently
\[
\text{\textbf{Ent}}_{n} \subset \text{\textbf{Ent}}_{p'} \subset \mathcal{E}_{\frac{np'}{n-p'}} = \mathcal{E}_{q},
\]
which completes the proof.
\end{proof}

Now, we provide an example~\cite{C98}, a function $u_\alpha$, such that for a fixed $p > 0$ and $0<\alpha < \frac{n-p}{n}$, we have
that $u_{\alpha} \in \text{\textbf{Ent}}_{p}$ if and only if $u_{\alpha} \in \mathcal{E}_{\frac{np}{n-p}}.$ This confirms the sharpness of the result in Theorem~B. Notably, $u_\alpha$ is unbounded.

\begin{example}
Let $u_{\alpha}(z) = -(\log |z|)^{\alpha} + (\log 2)^{\alpha}$ for $0 < \alpha < 1$, be a plurisubharmonic function in the ball $B(0,\frac{1}{2}) \subset \mathbb{C}^n$. Then $(dd^c u_{\alpha})^n = c f_{\alpha}(z) dV_{2n}$, where $c$ is a constant, and the density function $f_{\alpha}(z)$ equals
\[
f_{\alpha}(z) = \frac{(-\log |z|)^{n(\alpha-1)-1}}{|z|^{2n}}.
\]
Note that $u_{\alpha} \in \mathcal{E}_p$ if and only if $\alpha < \frac{n}{n+p}$. Moreover,
\[
\int_{B(0,\frac{1}{2})} f_{\alpha}(\log f_{\alpha})^q dV_{2n} \approx \int_0^{\frac{1}{2}} \frac{(-\log r)^{n(\alpha-1)-1+q}}{r} dr.
\]
Hence, $f_{\alpha} \in L^{\varphi_q}$ if and only if $\alpha < \frac{n-q}{n}$. Furthermore, $u_{\alpha}$ is an unbounded plurisubharmonic function such that, for fixed $p > 0$ and $\alpha < \frac{n-p}{n}$,
\[
f_{\alpha} \in L^{\varphi_p} \iff u_{\alpha} \in \mathcal{E}_{\frac{np}{n-p}}.
\]\hfill{$\Box$}
\end{example}

Now, we will examine the case when $p = n$. We know that for $p > n$, every $u \in \textbf{Ent}_p$ is bounded, and from Theorem~B, that for $p = n$, $u$ belongs to all Cegrell classes $\mathcal{E}_q$, $q > 0$. To complete the entire picture, we need to address the question of the boundedness of $u$. Finally, we provide an example to demonstrate that for $p = n$, a function $u \in \textbf{Ent}_n$ can indeed be unbounded.

\begin{example}
Let $0 < \beta < \frac{n-1}{n}$ and $u_{\beta}(z) = (\log(-\log |z|))^{\beta} - 1$. Then $u_{\beta}$ is an unbounded plurisubharmonic function in the ball $B(0,e^{-e}) \subset \mathbb{C}^n$. We have $(dd^c u_{\beta})^n = c f_{\beta}(z) dV_{2n}$, where $c$ is a constant and
\[
f_{\beta}(z) = \frac{\log((- \log |z|))^{n(\beta-1)-1} (\log(-\log |z|) + \beta - 1)}{(-\log |z|)^{n+1} |z|^{2n}}.
\]
It is observed that $u_{\beta} \in \mathcal{E}_p$ for all $p > 0$. Additionally,
\[
\int_{B(0,e^{-e})} f_{\beta}(\log f_{\beta})^n dV_{2n} \approx \int_0^{e^{-e}} \frac{\log((- \log r))^{n(\beta-1)}}{(-\log r)r} dr < \infty,
\]
by the assumption that $0 < \beta < \frac{n-1}{n}$. Thus, $f_{\beta} \in L^{\varphi_n}$. \hfill{$\Box$}
\end{example}

From now on, through the rest of this section, we assume that $(X, \omega)$ is a connected and compact K\"ahler manifold of complex dimension $n$, with $n \geq 2$ and $p > 0$. Here, $\omega$ denotes a K\"ahler form on $X$ satisfying $\int_{X} \omega^n = 1$.

For any function $u$ in the space of $\omega$-plurisubharmonic functions, $\mathcal{PSH}(X, \omega)$, we define
\[
\omega_u = dd^c u + \omega.
\]

We consider $\mathcal{E}(X, \omega)$ to be the class of all $\omega$-plurisubharmonic functions, described as
\[
\mathcal{E}(X, \omega) = \left\{ u \in \mathcal{PSH}(X, \omega) : \int_X \omega_u^n = 1 \right\}.
\]
We then define the class of \emph{$\omega$-plurisubharmonic functions with bounded $p$-energy}, $\mathcal{E}_p(X, \omega)$, as
\begin{equation}\label{Def Ep}
\mathcal{E}_p(X, \omega) := \left\{ u \in \mathcal{E}(X, \omega) : u \leq 0, e_p(u) = \int_X (-u)^p \omega_u^n < \infty \right\}.
\end{equation}

We shall refer to the following theorem proven in~\cite{DNGL}  in the proof of Theorem~A.

\begin{theorem}\label{man}
Let $p > 0$. Then there exist constants $c, C > 0$ dependent only on $X, \omega, n,$ and $p$, such that for all $u \in \mathcal{E}_p(X, \omega)$ with $\sup_X u = -1$, the following inequality holds:
\[
\int_{X} \exp\left(c e_p(u)^{-\frac{1}{n}}(-u)^{\frac{n+p}{n}}\right) \omega^n \leq C.
\]
\end{theorem}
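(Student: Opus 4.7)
The plan is to reduce the exponential integrability to a uniform tail bound on the sublevel sets of $u$:
\[
\operatorname{vol}(\{u<-s\})\le A\exp\bigl(-B\, e_p(u)^{-1/n}\, s^{(n+p)/n}\bigr),\qquad s\ge 0,
\]
with $A,B>0$ depending only on $X,\omega,n,p$. Granting this, the layer-cake formula together with the substitution $s=e_p(u)^{1/(n+p)}\, t^{n/(n+p)}$ (which yields $s^{(n+p)/n}e_p(u)^{-1/n}=t$) gives
\[
\int_X\exp\bigl(c\, e_p(u)^{-1/n}(-u)^{(n+p)/n}\bigr)\omega^n\;\le\;1+\int_0^\infty c\, e^{ct}\,A\, e^{-Bt}\,dt\;\le\;1+\frac{Ac}{B-c},
\]
finite for any $0<c<B$. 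The key observation is that the exponent $(n+p)/n$ and the normalization $e_p(u)^{-1/n}$ are exactly what is required to make the $e_p(u)$ factor in the tail cancel out, leaving universal exponential decay in $t$.

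The tail estimate itself rests on two classical building blocks. First, a Chebyshev-type energy bound: since $(-u)^p\ge s^p$ on $\{u<-s\}$,
\[
\int_{\{u<-s\}}\omega_u^n\;\le\;\frac{e_p(u)}{s^p}.
\]
Second, the compact K\"ahler comparison principle in the style of Ko\l odziej and Guedj--Zeriahi, which after splitting at an intermediate level $t\in(0,s)$ produces
\[
(s-t)^n\,\operatorname{Cap}_\omega(\{u<-s\})\;\le\;\int_{\{u<-t\}}\omega_u^n,
\]
where $\operatorname{Cap}_\omega$ is the Monge--Amp\`ere capacity of $(X,\omega)$. Feeding the first estimate into the right-hand side and optimizing the split in $t$ (the function $t^p(s-t)^n$ attains its maximum at $t=ps/(n+p)$) yields
\[
\operatorname{Cap}_\omega(\{u<-s\})\;\le\;K(n,p)\,\frac{e_p(u)}{s^{n+p}}.
\]
The final ingredient is Zeriahi's exponential conversion from capacity to volume: there are constants $\alpha,\Lambda>0$ depending only on $(X,\omega)$ with $\operatorname{vol}(K)\le\Lambda\exp(-\alpha\,\operatorname{Cap}_\omega(K)^{-1/n})$ for every Borel $K\subset X$. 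Applying this with $K=\{u<-s\}$ and plugging in the capacity decay gives the claimed tail with $B=\alpha\,K(n,p)^{-1/n}$.

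The main technical obstacle is that the comparison--capacity step is most cleanly stated for bounded $\omega$-plurisubharmonic functions, while a generic $u\in\mathcal{E}_p(X,\omega)$ need not be globally bounded and its Monge--Amp\`ere measure need not be finite a priori in a useful quantitative sense. The standard remedy is to run the entire argument on the canonical truncations $u_j:=\max(u,-j)\in\mathcal{PSH}(X,\omega)\cap L^\infty$, which decrease to $u$: for each $u_j$ both inequalities above hold classically, the energies $e_p(u_j)$ remain uniformly controlled by $e_p(u)$ via monotone convergence of Monge--Amp\`ere operators along decreasing sequences in $\mathcal{E}(X,\omega)$, and one passes to the limit with monotone convergence of volumes. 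The normalization $\sup_X u=-1$ enters in order to guarantee $-u\ge 1$, so that the low-$s$ regime is trivial and can be absorbed into the constant $A$; all constants that appear depend only on $X,\omega,n,p$, as required.
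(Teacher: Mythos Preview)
The paper does not prove this statement; it is quoted from \cite{DNGL} and used as a black box in the proof of Theorem~A. Your outline---Chebyshev on sublevel sets, a capacity decay estimate, Zeriahi's uniform Skoda inequality to convert capacity to volume, then layer cake---is exactly the architecture behind such Moser--Trudinger bounds, and the computation in your first paragraph is correct once the tail estimate is granted.

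There is, however, a real gap in your capacity step on a compact K\"ahler manifold. The inequality $(s-t)^n\operatorname{Cap}_\omega(\{u<-s\})\le\int_{\{u<-t\}}\omega_u^n$ is obtained by comparing $u$ with $(s-t)v-t$ for a competitor $-1\le v\le0$; but $(s-t)v$ is $\omega$-plurisubharmonic only when $s-t\le1$, since $dd^c\bigl((s-t)v\bigr)+\omega=(s-t)\,\omega_v+\bigl(1-(s-t)\bigr)\omega$. Hence the free optimization of $t^p(s-t)^n$ over $0<t<s$ is illegitimate in the compact setting (it \emph{is} valid in the local hyperconvex case, where there is no background form---your argument in fact proves Theorem~\ref{t2}). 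With the constraint $s-t\le1$ one obtains only $\operatorname{Cap}_\omega(\{u<-s\})\lesssim e_p(u)/s^{p}$, which after Skoda--Zeriahi produces the exponent $p/n$ rather than $(n+p)/n$, strictly weaker than the assertion. The route in \cite{DNGL} avoids this by localizing: cover $X$ by finitely many coordinate balls on which $\omega=dd^c\rho_j$, observe that $u+\rho_j$ lies in the local Cegrell class $\mathcal{E}_p$ with energy controlled by $e_p(u)$ and constants depending only on $(X,\omega)$, apply the local inequality (Theorem~\ref{t2}) on each ball, and sum. You should insert this localization step rather than invoke a global capacity--comparison bound in the form you stated.
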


Utilizing the proof of Theorem~B and Theorem~\ref{man}, we arrive at the main result of this paper:

\bigskip

\noindent {\bf Theorem A.} \emph{Let $u$ be an $\omega$-plurisubharmonic function
on a connected and compact K\"ahler manifold $(X,\omega)$, with $\omega$ being  a normalized K\"ahler form on $X$ of complex dimension $n \geq 2$, and $\sup_X u=-1$. Let $0<p<n$. If the probability measure $\mu = (dd^c u + \omega)^n = f\omega^n$ has finite $p$-entropy, i.e.,
\begin{equation}\label{Pentr}
\int_X (1+f)(\log (1+f))^p \omega^n < \infty,
\end{equation}
then $u$ belongs to $\mathcal{E}_{\frac{np}{n-p}}$. If additionally,
\begin{equation*}
\int_X (1+f)(\log (1+f))^p \omega^n < A,
\end{equation*}
for a constant $A > 0$, then there are constants $c, C > 0$ depending only on $n, p, A$ such that any probability measure $\mu = (dd^c u + \omega)^n = f \omega^n$ with uniform bound of its $p$-entropy in the sense of~\eqref{Pentr_II} satisfies:
\[
\int_X \exp\left(c(n,p,A)(-u)^{\frac{n}{n-p}}\right) \omega^n < C(n,p,A)\quad \text{and} \quad e_{\frac{np}{n-p}}(u) < C(n,p,A).
\]}

\section{Stability}\label{Sec_Stability}

In Theorem~A and Theorem~B, we established that $\Ent_p \subset \mathcal{E}_{\frac{np}{n-p}}$, demonstrating the existence of a unique solution $u \in \mathcal{E}_{\frac{np}{n-p}}$ for the Dirichlet problem associated with the complex Monge-Amp\`ere equations:
\[
(dd^cu+\omega)^n=f\omega^n, \quad \text{and} \quad (dd^c u)^n = f\,dV_{2n},
\]
where the density function $f$ is a member of the Orlicz space $L^{\Phi_p}$. Following the verification of a solution's existence, the next logical step is to examine the stability of this solution.

Stability in the context of the complex Monge-Amp\`ere equation is defined as follows: slight perturbations of the right-hand side function $g$ within the space $L^{\Phi_p}$ should result in a new solution that is close to the original in a specific sense. In the literature, strong stability results, where the supremum norm of continuous or bounded solutions is estimated by the $L^p$ norm of density differences, have been well documented (see~\cite{Czy09}).

For unbounded solutions, where uniform convergence is unattainable, the goal may shift to achieving a weak stability, characterized by convergence in capacity or weak convergence (in the $L^q$ norm) of the solutions.

Since the prove of stability results are different for hyperconvex domains and compact K\"ahler manifolds we shall provide both of them.

\subsection{Stability in hyperconvex domains}

This section aims to establish a stability theorem for the complex Monge-Amp\`ere equation within the class $\Ent_p$ for $p \leq n$. Given that our solutions may be unbounded, we seek to demonstrate a form of convergence that surpasses both weak convergence and convergence in capacity. Specifically, we will prove convergence in the quasi-metric $\Jp$, defined on $\mathcal{E}_{p}(\Omega)$ as follows.

For $u,v\in \mathcal E_{p}(\Omega)$ and $p>0$ let us define
\[
\Jp(u,v)=\left(\int_{\Omega}|u-v|^p((dd^cu)^n+(dd^cv)^n)\right)^\frac {1}{p+n}.
\]
It was proved in~\cite{AC22} that $(\mathcal E_{p}(\Omega),\Jp)$ is a complete quasi-metric space.

We shall need the following comparison principle, see~\cite{ACCP09,NP09}.

\begin{theorem}\label{cp}
Let $\Omega$ be a bounded hyperconvex domain, let $u,v\in \mathcal F(\Omega)$, $w\in \mathcal E_{0}(\Omega)$ then
\[
\int_{\{u<v\}}(v-u)^n(dd^cw)^n\leq n!\|w\|_{L^\infty}^{n-1}\int_{\{u<v\}}(-w)((dd^cu)^n-(dd^cv)^n).
\]
\end{theorem}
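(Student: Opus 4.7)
The plan is to reduce the problem to the bounded setting and then perform a careful iterated integration by parts. Since $u,v\in\mathcal{F}(\Omega)$, by definition there exist decreasing sequences $u_j,v_j\in\Eo$ with $u_j\searrow u$ and $v_j\searrow v$, having uniformly bounded total Monge--Amp\`ere mass. If the inequality is already known for the triples $(u_j,v_j,w)$, the general case follows by monotone convergence of the weight $(v_j-u_j)^n$ together with the weak continuity of the Monge--Amp\`ere operator along decreasing sequences in the Cegrell classes, noting that $\{u<v\}=\bigcup_j\{u_j<v_j\}$ up to a pluripolar set, which carries no Monge--Amp\`ere mass for elements of $\mathcal{F}$.

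Assume now $u,v,w\in\Eo$ and set $\tilde v:=\max(u,v)\in\Eo$, so that $\phi:=\tilde v-u\geq 0$ vanishes on $\{u\geq v\}$ and coincides with $v-u$ on $\{u<v\}$. By Demailly's localization of the Monge--Amp\`ere operator, $(dd^c\tilde v)^n=(dd^c v)^n$ on $\{u<v\}$ while $(dd^c\tilde v)^n=(dd^c u)^n$ on $\{u>v\}$, so the identity $\phi\equiv 0$ on $\{u\geq v\}$ lets me extend both sides of the desired inequality to integrals over all of $\Omega$, reducing the theorem to
\[
\int_\Omega \phi^n(dd^cw)^n\leq n!\,\|w\|_{L^\infty}^{n-1}\int_\Omega(-w)\bigl[(dd^cu)^n-(dd^c\tilde v)^n\bigr].
\]
I would prove this by iterated Stokes, pushing $n$ factors of $dd^c$ from $w$ onto $\phi$; the Leibniz expansion of $dd^c(\phi^k)$ produces the combinatorial factor $n!$, while $n-1$ of the arising copies of $w$ are absorbed by $\|w\|_{L^\infty}$. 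The single remaining copy of $-w\geq 0$ pairs with $dd^c\phi=dd^c\tilde v-dd^cu$ and, after identifying this pairing with the difference of the top Monge--Amp\`ere measures, yields the stated bound, the sign being forced by $\phi\geq 0$ together with Demailly's comparison $\int_\Omega (dd^c\tilde v)^n\leq\int_\Omega(dd^cu)^n$.

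The main obstacle is that $\phi$ is only a difference of plurisubharmonic functions, so iterating $dd^c$ on $\phi^k$ forces one to give meaning, and then control, to terms such as $d\phi\wedge d^c\phi\wedge(dd^cw)^{n-1}$ and the mixed currents $dd^cu\wedge(dd^c\tilde v)^{n-1}$, none of which are a priori positive. The standard remedy is to replace $u,v,w$ by smooth plurisubharmonic regularizations $u^\varepsilon,v^\varepsilon,w^\varepsilon$, execute the integration by parts in the smooth category where every manipulation is elementary, and pass to the limit $\varepsilon\to 0^+$ using the Bedford--Taylor convergence theorems and Chern--Levine--Nirenberg estimates to control the mixed currents. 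Ensuring that the sign and the $n!\|w\|^{n-1}_{L^\infty}$ constant survive this limiting procedure, and that mixed wedge products can be compared to the pure powers $(dd^cu)^n$ and $(dd^c\tilde v)^n$, is the delicate point; a recursive estimate together with the polarization inequalities for the Monge--Amp\`ere operator should close the argument.
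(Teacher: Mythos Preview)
The paper does not actually prove Theorem~\ref{cp}; it simply states the inequality and cites \cite{ACCP09,NP09}. There is therefore no ``paper's own proof'' to compare your proposal to.

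That said, your sketch follows the same broad strategy as those references (reduce to $\mathcal{E}_0$, replace $v$ by $\tilde v=\max(u,v)$, then integrate by parts), but two steps are not yet under control. First, the passage from $\mathcal{E}_0$ to $\mathcal{F}$ via ``$\{u<v\}=\bigcup_j\{u_j<v_j\}$ up to a pluripolar set'' is not correct for arbitrary decreasing approximants $u_j\searrow u$, $v_j\searrow v$: these sets have no monotonicity, and their union need not recover $\{u<v\}$. The standard cure is to work on $\{u<v-\varepsilon\}$ and exploit that $\{u_j<v_j\}\supset\{u<v-\varepsilon\}$ for $j$ large, or to run the approximation only in $u$.

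Second, the integration-by-parts step is where the real content lies, and your description ``the single remaining copy of $-w$ pairs with $dd^c\phi=dd^c\tilde v-dd^cu$ and, after identifying this pairing with the difference of the top Monge--Amp\`ere measures, yields the stated bound'' is not yet an argument. One application of Stokes gives a term in $(-w)\,dd^c\phi\wedge(dd^cw)^{n-1}$, not in $(-w)\bigl[(dd^cu)^n-(dd^c\tilde v)^n\bigr]$; bridging that gap requires an honest induction in which, at each step, one trades a factor $dd^cw$ for a factor of $dd^cu$ (not $dd^c\tilde v$) at the cost of $\|w\|_{L^\infty}$, using that $\phi\,dd^cu\wedge T\ge\phi\,dd^c\tilde v\wedge T$ in the integrated sense when $T$ is closed positive and $\phi=\tilde v-u\ge0$. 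Only after $n$ such steps does the telescoping $(dd^cu)^n-(dd^c\tilde v)^n$ emerge. Your final paragraph acknowledges exactly this obstacle but does not resolve it; the ``polarization inequalities'' you invoke need to be made explicit, since the naive Leibniz expansion of $dd^c(\phi^k)$ also produces the gradient term $d\phi\wedge d^c\phi$, whose sign works in your favour but which must be tracked through the recursion.
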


The first goal of this section is to prove the following stability theorem.

\begin{theorem}\label{thm_stability}
Let $n \geq 2$, $0< p < n$, and define $p_0 = \frac{np}{n-p}$. Assume that $\Omega \subset \mathbb{C}^n$ is a hyperconvex domain and let $\int_{\Omega}\varphi_p(g)dV_{2n}<\infty$. If $0 \leq f, f_j \leq 1$ are measurable functions such that $f_jg \to fg$ in $L^{\Phi_p}$ as $j \to \infty$, then $\mathbf{J}_q(u_j,u) \to 0$ for $q < p_0$, where $u_j,u \in \mathcal{E}_{p_0}$ are the unique functions satisfying
\[
(dd^c u_j)^n = f_jg dV_{2n} \quad \text{and} \quad (dd^c u)^n = f g dV_{2n}.
\]
\end{theorem}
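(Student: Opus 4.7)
The plan is to show that
\[
\mathbf{J}_q(u_j,u)^{q+n}=\int_\Omega|u_j-u|^q(f_j+f)g\,dV_{2n}\longrightarrow 0
\]
by invoking Vitali's convergence theorem on $(\Omega,dV_{2n})$, which requires pointwise a.e.\ convergence $u_j\to u$ together with uniform $dV_{2n}$-absolute continuity of the family $\{|u_j-u|^q(f_j+f)g\}_j$. Since $0\le f_j,f\le 1$ and $\varphi_p$ is increasing, $\int_\Omega\varphi_p(f_jg)\,dV_{2n}\le A:=\int_\Omega\varphi_p(g)\,dV_{2n}$ and similarly for $fg$, so $u_j,u\in\Ent_p^A$. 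Theorem~B then supplies constants $c,C=C(n,p,A,\Omega)$ with
\[
\sup_j e_{p_0}(u_j)\le C\quad\text{and}\quad\sup_j\int_\Omega\exp\!\left(c(-u_j)^{n/(n-p)}\right)dV_{2n}\le C,
\]
and consequently $\{u_j\}$ is bounded in $L^r(\Omega)$ for every $r\ge 1$.

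For the a.e.\ convergence, plurisubharmonic compactness extracts a subsequence $u_{j_k}$ converging in $L^1_{\mathrm{loc}}(\Omega)$ to some $v\in\PSH{\Omega}$. To identify $v=u$, I apply the comparison principle (Theorem~\ref{cp}) to $u_{j_k},u\in\mathcal{F}$ with $w\in\Eo\cap L^\infty$: combining the estimate on $\{u<u_{j_k}\}$ with its symmetric counterpart on $\{u_{j_k}<u\}$ gives
\[
\int_\Omega|u_{j_k}-u|^n(dd^cw)^n\le 2\,n!\,\|w\|_\infty^n\int_\Omega|f_{j_k}-f|\,g\,dV_{2n}.
\]
The $L^{\Phi_p}$-convergence $f_jg\to fg$ controls the right-hand side, so the left-hand side vanishes for every admissible $w$; standard pluripotential arguments (convergence in capacity plus Cegrell's uniqueness of Monge--Amp\`ere solutions in $\mathcal{F}$) then force $v=u$. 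Hence the entire sequence $u_j\to u$ in capacity and, along a further subsequence, almost everywhere.

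The heart of the matter is uniform integrability, which rests on the Orlicz duality of Section~\ref{Sec Back}. Young's inequality (Theorem~\ref{yi}) applied to the pair $\Phi_p,\Phi_p^*$ with a parameter $\lambda>0$ gives, via~\eqref{e0},
\[
g|u_j|^{p_0}\le\Phi_p(\lambda g)+\Phi_p^*\!\left(\lambda^{-1}|u_j|^{p_0}\right)\le\Phi_p(\lambda g)+\exp\!\left(c_p\lambda^{-1/p}|u_j|^{n/(n-p)}\right).
\]
Choosing $\lambda$ large enough that $c_p\lambda^{-1/p}\le c$, and using $g\in L^{\Phi_p}$ together with Theorem~B, yields $\sup_j\int_\Omega|u_j|^{p_0}g\,dV_{2n}\le M<\infty$, with the same bound for $u$. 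A Chebyshev split then produces, for any measurable $E\subset\Omega$ and $R>0$,
\[
\int_E|u_j|^q(f_j+f)g\,dV_{2n}\le 2R^q\int_Eg\,dV_{2n}+2R^{q-p_0}M,
\]
which is uniformly small as $V_{2n}(E)\to 0$: choose $R$ large (using $q<p_0$), then $V_{2n}(E)$ small. The same estimate with $u$ in place of $u_j$ together with $|u_j-u|^q\le 2^q(|u_j|^q+|u|^q)$ yields uniform absolute continuity of $\{|u_j-u|^q(f_j+f)g\}_j$; Vitali's theorem then delivers $\mathbf{J}_q(u_{j_k},u)\to 0$ along the a.e.-convergent subsequence, and the usual subsequence-of-subsequence principle extends this to the whole sequence.

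The main obstacle is precisely this uniform integrability step: because $g$ is only assumed to lie in the Orlicz space $L^{\Phi_p}$, and need not belong to any $L^s$ with $s>1$, no direct H\"older bound controls $|u_j|^qg$. Instead one must exploit the Legendre duality between $\Phi_p$ and $\Phi_p^*$, saturated by the sharp exponential integrability furnished by Theorem~\ref{t2}; the strict inequality $q<p_0$ is exactly what opens the Chebyshev gap needed to convert the energy bound into uniform absolute continuity.
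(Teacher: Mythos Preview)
Your argument is correct, and its ingredients overlap substantially with the paper's, but the organization is genuinely different. The paper proceeds by a direct two-scale estimate: it bounds $\mathbf{J}_q(u_j,u)^{q+n}\le 2\int_\Omega|u_j-u|^qg\,dV_{2n}$ and splits this integral according to $\{g>M\}$ versus $\{g\le M\}$. On $\{g>M\}$ it uses H\"older's inequality together with the energy H\"older inequality (Theorem~\ref{thm_holder}) to obtain a uniform bound on $\int_\Omega|u_j-u|^{p_0}g\,dV_{2n}$, paired with the tail estimate $\int_{\{g>M\}}g\,dV_{2n}\le C(g)(\log(1+M))^{-p}$; on $\{g\le M\}$ it invokes the comparison principle exactly as you do to reach $\|f_jg-fg\|_{\Phi_p}$. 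The proof is then concluded by choosing $M$ large, then $j$ large.

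Your route replaces this explicit $M$-splitting by Vitali's theorem: the comparison principle supplies a.e.\ convergence (your detour through $L^1_{\mathrm{loc}}$ compactness is unnecessary---the comparison estimate already gives $\int_\Omega|u_j-u|^n\,dV_{2n}\to 0$ for the full sequence), while uniform integrability comes from splitting on $\{|u_j|>R\}$ and controlling $\int_\Omega|u_j|^{p_0}g\,dV_{2n}$ via Young's inequality and the exponential integrability of Theorem~B rather than via Theorem~\ref{thm_holder}. Thus the paper uses the energy bound of Theorem~B together with Theorem~\ref{thm_holder}, whereas you use the exponential bound of Theorem~B together with the $\Delta_2$-property of $\Phi_p$ (needed so that $\int_\Omega\Phi_p(\lambda g)\,dV_{2n}<\infty$ for your chosen $\lambda$). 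The paper's approach is slightly more quantitative---one can read off a rate in terms of $\|f_jg-fg\|_{\Phi_p}$---while yours is a clean soft argument that sidesteps the case distinction $q\le n$ versus $q>n$ appearing in the paper.
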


\begin{remark}
It is important to note, as proven in~\cite{DNGL}, that for $p = 1$, the convergence $\mathbf{J}_q(u_j,u) \to 0$ in Theorem~\ref{thm_stability} does not hold when $q = \frac{n}{n-1}$.
\end{remark}

\begin{proof}
First, by the proof of Theorem~\ref{man}, we have
\begin{equation}\label{e3}
\sup_j (e_{p_0}(u_j), e_{p_0}(u)) \leq C(g),
\end{equation}
where $C(g)$ depends on the $p_0$-entropy of $g$ and is independent of $j$. Furthermore, by the Sobolev type inequality (see~\cite{AC19}), for any $\alpha > 0$,
\begin{equation}\label{e4}
\int_{\Omega} |u_j - u|^{\alpha} dV_{2n} \leq C \left(\sup_j (e_{p_0}(u_j), e_{p_0}(u))\right)^{\frac {\alpha}{n+p_0}} \leq C_1(g),
\end{equation}
where $C_1(g)$ depends on the $p_0$-entropy of $g$ and is independent of $j$. Fix $q < p_0$ and $M > 0$. We have
\begin{equation}\label{es1}
\begin{aligned}
\mathbf{J}_q(u_j,u)^{q+n} & = \int_{\Omega} |u_j - u|^q (f_jg + fg) dV_{2n}
\leq 2 \int_{\Omega} |u_j - u|^q g dV_{2n} \\
&\leq 2 \left(\int_{\Omega \cap \{g > M\}} |u_j - u|^q g dV_{2n} + \int_{\Omega \cap \{g \leq M\}} |u_j - u|^q g dV_{2n}\right) \\
&= 2(I_1 + I_2).
\end{aligned}
\end{equation}
Estimate the integral $I_1$ using H\"older's inequality:
\begin{equation}\label{es2}
\begin{aligned}
I_1 &= \int_{\Omega \cap \{g > M\}} |u_j - u|^q g dV_{2n} \\
&\leq \left(\int_{\Omega \cap \{g > M\}} |u_j - u|^{p_0} g dV_{2n}\right)^{\frac{q}{p_0}} \left(\int_{\Omega \cap \{g > M\}} g dV_{2n}\right)^{\frac{p_0 - q}{p_0}} \\
& = \left(I_3\right)^{\frac{q}{p_0}} \left(I_4\right)^{\frac{p_0 - q}{p_0}}.
\end{aligned}
\end{equation}

Let $\psi_0 \in \mathcal{E}_{p_0}$ be such that $(dd^c \psi_0)^n = g dV_{2n}$. Then by (\ref{e3}) and Theorem~\ref{thm_holder},
\begin{equation}\label{es3}
\begin{aligned}
I_3 & = \int_{\Omega \cap \{g > M\}} |u_j - u|^{p_0} (dd^c \psi_0)^n \\
& \leq \int_{\Omega} (-u_j - u)^{p_0} (dd^c \psi_0)^n \\
& \leq d \sup_j (e_{p_0}(u_j)^{\frac{p_0}{p_0+n}}, e_{p_0}(u)^{\frac{p_0}{p_0+n}}, e_{p_0}(\psi_0)^{\frac{n}{p_0+n}}) \leq C_2(g),
\end{aligned}
\end{equation}
where $C_2(g)$ depends on the $p_0$-entropy of $g$ and is independent of $j$.

The integral $I_4$ can be estimated as follows:
\begin{equation}\label{es4}
I_4 = \int_{\Omega \cap \{g > M\}} g dV_{2n} \leq \frac{C_3(g)}{(\log(1 + M))^{p}},
\end{equation}
where $C_3(g)$ depends on the $p_0$-entropy of $g$ and is independent of $j$.

Combining (\ref{es2}), (\ref{es3}), and (\ref{es4}), we obtain:
\begin{equation}\label{f1}
I_1 \leq C_4(g) (\log(1 + M))^{\frac{p(q - p_0)}{p_0}},
\end{equation}
where $C_4(g)$ depends on the $p_0$-entropy of $g$, and is independent of $j$.

Now, we estimate the integral $I_2$. Let $w \in \mathcal{E}_0$ be such that $(dd^c w)^n = dV_{2n}$. Then,
\begin{equation}\label{es5}
I_2 = \int_{\Omega \cap \{g \leq M\}} |u_j - u|^q g dV_{2n} \leq M \int_{\Omega} |u_j - u|^{q} (dd^c w)^n.
\end{equation}

\medskip

\noindent Consider two cases:

\medskip

\noindent\emph{Case 1.} Assume first that $q \leq n$. Then, using Theorem~\ref{orlicz_holder} and Theorem~\ref{cp}
\begin{equation}\label{es6}
\begin{aligned}
I_2 & \leq M \left(\int_{\Omega} |u_j - u|^{n} (dd^c w)^n\right)^{\frac{q}{n}} V_{2n}(\Omega)^{\frac{n - q}{n}} \\
& \leq M\, V_{2n}(\Omega)^{\frac{n - q}{n}} \left(n! \|w\|_{L^{\infty}}^{n-1} \int_{\Omega} (-w) |f_jg - fg| dV_{2n}\right)^{\frac{q}{n}} \\
& \leq M \, V_{2n}(\Omega)^{\frac{n - q}{n}}\left(n! \|w\|_{L^{\infty}}^{n-1}\right)^{\frac qn} \|f_jg - fg\|_{\Phi_p}^{\frac{q}{n}}(\|w\|_{\Phi_p^*}^0)^{\frac{q}{n}}, \\
& \leq M C_5\, \|f_jg - fg\|_{\Phi_p}^{\frac{q}{n}},
\end{aligned}
\end{equation}
where $C_5$ is a constant independent of $j$.

\medskip

\noindent\emph{Case 2.} Now assume that $q > n$. Using Theorem~\ref{orlicz_holder}, (\ref{e4}), and (\ref{es6}), we obtain:
\begin{multline}\label{es7}
I_2 \leq M \left(\int_{\Omega} |u_j - u|^{n} (dd^c w)^n\right)^{\frac{n - 1}{n}} \left(\int_{\Omega} |u_j - u|^{n(q - n + 1)} (dd^c w)^n\right)^{\frac{1}{n}} \\
\leq M C_6(g) \|f_jg - fg\|_{\Phi_p}^{\frac{n - 1}{n}},
\end{multline}
where $C_6(g)$ depends on the $p_0$-entropy of $g$, and is independent of $j$.

\medskip

In both cases (\ref{es6}) and (\ref{es7}), we have:
\begin{equation}\label{es8}
I_2 \leq M C_7(g) \|f_jg - fg\|_{\Phi_p},
\end{equation}
where $C_7(g)$ depends on the $p_0$-entropy of $g$, and is independent of $j$. Finally, to prove that $\mathbf{J}_q(u_j,u)^{q+n} \to 0$ as $j \to \infty$, fix $\epsilon> 0$. Then choose $M > 0$ large enough such that $I_1 \leq \frac{\epsilon}{4}$, see (\ref{f1}). Since $\|f_jg - fg\|_{\Phi_p} \to 0$ by assumption, we can choose $j$ large enough so that $I_2 \leq \frac{\epsilon}{4}$, according to (\ref{es8}). This leads to
\[
\mathbf{J}_q(u_j,u)^{q+n} = 2(I_1 + I_2) \leq 2\left(\frac{\epsilon}{4} + \frac{\epsilon}{4}\right) = \epsilon,
\]
which implies $\mathbf{J}_q(u_j,u) \to 0$ as $j \to \infty$, thereby concluding the proof.
\end{proof}

\subsection{Stability on compact K\"ahler manifolds}
Recall that we assume $n \geq 2$ and $p > 0$. Consider $(X, \omega)$ a connected and compact K\"ahler manifold of complex dimension $n$, where $\omega$ is a K\"ahler form on $X$ normalized such that $\int_{X} \omega^n = 1$.

Thanks to the work of Darvas~\cite{D1, D2}, we have a Finsler metric ${\bf d}_p$ on $\mathcal{E}_p(X, \omega)$ making $(\mathcal{E}_p(X, \omega), {\bf d}_p)$ into a complete geodesic metric space.  In Theorem~\ref{thm_stability_cKm} we need that for any $u, v \in \mathcal{E}_p(X, \omega)$ and for $p > 0$, there exists a constant $C = C(n, p)$, which depends only on $p$ and $n$, such that:
\begin{equation}\label{ineqdp}
C^{-1} \int_X |u - v|^p (\omega_u^n + \omega_v^n) \leq {\bf d}_p(u, v) \leq C \int_X |u - v|^p (\omega_u^n + \omega_v^n).
\end{equation}

Next, we present our stability result.

\begin{theorem}\label{thm_stability_cKm}
Let $n \geq 2$, $0< p < n$, and define $p_0 = \frac{np}{n-p}$. Assume that $g$ is measurable function such that $\int_{\Omega}\varphi_p(g)\omega^n<\infty$. If $0 \leq f, f_j \leq 1$ are measurable functions such that $\int_Xf_jg\omega^n=\int_Xfg\omega^n=1$ and $f_jg \to fg$ in $L^{\Phi_p}(X)$ as $j \to \infty$, then $\mathbf{d}_q(u_j,u) \to 0$ for $q < p_0$, where $u_j,u \in \mathcal{E}_{p_0}(X,\omega)$ are the unique functions satisfying $\sup_Xu_j=\sup_Xu=-1$ and
\[
\omega_{u_j}^n = f_jg \omega^n \quad \text{and} \quad \omega_u^n = f g \omega^n.
\]
\end{theorem}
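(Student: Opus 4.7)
The plan is to mirror the proof of Theorem \ref{thm_stability}, using the Finsler metric inequality (\ref{ineqdp}) in place of the quasi-metric $\mathbf{J}_p$ and substituting compact K\"ahler analogues for the pluripotential tools. First, since $0\leq f_j\leq 1$ and $\varphi_p$ is increasing, $\int_X \varphi_p(f_j g)\,\omega^n \leq \int_X \varphi_p(g)\,\omega^n =: A < \infty$ uniformly in $j$, so Theorem A yields $u_j, u \in \mathcal{E}_{p_0}(X,\omega)$ with $\sup_j\max(e_{p_0}(u_j), e_{p_0}(u))\leq C_1$, and Theorem \ref{man} furnishes uniform moments $\int_X (-u_j)^r\omega^n \leq C_r$ for every $r>0$. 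From (\ref{ineqdp}),
\[
\mathbf{d}_q(u_j,u)\leq C\int_X |u_j-u|^q(\omega_{u_j}^n+\omega_u^n)\leq 2C\int_X |u_j-u|^q g\,\omega^n,
\]
which I would split as $I_1+I_2$ across $\{g>M\}$ and $\{g\leq M\}$, exactly as in Theorem \ref{thm_stability}.

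For $I_1$, H\"older with exponents $p_0/q$ and $p_0/(p_0-q)$ separates the tail mass $\int_{\{g>M\}}g\,\omega^n \leq A(\log(1+M))^{-p}$ from $\int_X |u_j-u|^{p_0}g\,\omega^n$. Setting $M_g=\int_X g\,\omega^n$ and applying Theorem A to the probability density $g/M_g$ produces $\psi_0\in\mathcal{E}_{p_0}(X,\omega)$ with $\omega_{\psi_0}^n=(g/M_g)\,\omega^n$ and $e_{p_0}(\psi_0)\leq C_2$. The compact K\"ahler analogue of Theorem \ref{thm_holder}, combined with the elementary bound $|u_j-u|^{p_0}\leq 2^{p_0-1}((-u_j)^{p_0}+(-u)^{p_0})$, then controls $\int_X |u_j-u|^{p_0}g\,\omega^n$ independently of $j$, so
\[
I_1 \leq C_3\,(\log(1+M))^{-p(p_0-q)/p_0}\to 0 \quad \text{as } M\to\infty,
\]
uniformly in $j$.

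The delicate part is $I_2\leq M\int_X |u_j-u|^q\,\omega^n$. The hyperconvex proof disposed of its analogue directly, via the comparison principle (Theorem \ref{cp}) with an auxiliary $w\in\mathcal{E}_0$ solving $(dd^cw)^n=dV_{2n}$, then Orlicz--H\"older against $\|f_jg-fg\|_{\Phi_p}$; on a compact manifold this shortcut fails because the natural substitute $\phi\equiv 0$ has zero oscillation and gives a trivial bound. Instead, I would argue indirectly: Step 1 renders $\{u_j\}$ relatively compact in $L^1(X,\omega^n)$ and uniformly bounded in $L^r(X,\omega^n)$ for every $r>0$, while $f_jg\to fg$ in $L^{\Phi_p}\subset L^1$. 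A Bedford--Taylor-type stability argument on finite-energy classes identifies any $L^1$-limit $u_\infty$ through $\omega_{u_\infty}^n=fg\,\omega^n$, and the uniqueness of normalized solutions with $\sup_X u_\infty=-1$ (Guedj--Zeriahi) forces $u_\infty=u$, so $u_j\to u$ in $L^1$; interpolation with the uniform $L^r$ moments upgrades this to $L^q$ convergence. Given $\varepsilon>0$, one then picks $M$ so that $I_1<\varepsilon/4$ uniformly, followed by $j$ large enough to make $I_2<\varepsilon/4$, concluding $\mathbf{d}_q(u_j,u)\to 0$. The main obstacle is thus precisely this replacement of the direct comparison-principle estimate by a weak-compactness plus Monge--Amp\`ere uniqueness argument in the compact K\"ahler setting.
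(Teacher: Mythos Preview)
Your proposal is correct and reaches the conclusion, but the route differs from the paper's. You mirror the hyperconvex decomposition $I_1+I_2$ over $\{g>M\}$ and $\{g\le M\}$, controlling $I_1$ via a compact K\"ahler analogue of Theorem~\ref{thm_holder} applied with an auxiliary potential $\psi_0$, and $I_2$ via $L^q$-convergence of $u_j\to u$ obtained from weak compactness plus uniqueness of the normalized solution. The paper instead avoids the split altogether: it applies the Orlicz--H\"older inequality directly to $\int_X|u_j-u|^q(f_jg+fg)\,\omega^n\le \|f_jg+fg\|_{\Phi_p}\,\||u_j-u|^q\|^0_{\Phi_p^*}$, notes that the first factor is uniformly bounded, and then shows $\||u_j-u|^q\|^0_{\Phi_p^*}\to 0$ by modular convergence, using the bound $\Phi_p^*(s)\le\exp(c_p s^{1/p})$ from Example~\ref{ex} together with the uniform exponential integrability $\int_X\exp\bigl(c(-u_j-u)^{n/(n-p)}\bigr)\omega^n\le C$ supplied by Theorem~A, and a final H\"older step isolating $\int_X|u_j-u|^{q+1}\omega^n\to 0$. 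Both arguments ultimately need $u_j\to u$ in $L^r(X,\omega^n)$; the paper simply cites Proposition~12.17 of \cite{GZbook} for this, which is exactly the compactness-plus-uniqueness mechanism you sketch. Your approach is more faithful to the hyperconvex proof but imports an energy inequality (the K\"ahler Theorem~\ref{thm_holder}) not stated in the paper; the paper's approach is more self-contained and exploits the exponential integrability conclusion of Theorem~A in an essential way, which your route does not.
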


\begin{proof}
First, observe that the convergence $f_jg \to fg$ in $L^{\Phi_p}(X)$ as $j \to \infty$ implies
\[
\int_X|f_jg-fg|\omega^n\to 0 \quad \text{as } j\to \infty.
\]
Therefore, by Proposition 12.17 in~\cite{GZbook}, $u_j\to u$ weakly (e.g., in $L^r(X,\omega)$ for all $r>0$).

Using the same argument as in the proof of Theorem~\ref{thm_stability}, we establish that
\[
\sup_j (e_{p_0}(u_j), e_{p_0}(u)) \leq C(g),
\]
where $C(g)$ depends on the $p_0$-entropy of $g$ and is independent of $j$.

Let us denote by $h_j = f_jg + fg$ and fix $q < p_0$. To prove that ${\bf d}_q(u_j, u) \to 0$ by (\ref{ineqdp}), it is sufficient to demonstrate that
\begin{equation}\label{int1}
\int_X|u-u_j|^q(\omega_u^n+\omega_{u_j}^n)\to 0 \quad \text{as } j\to \infty.
\end{equation}
By Theorem~\ref{orlicz_holder}, we have
\[
\int_X|u_j-u|^qh_j\omega^n \leq \|h_j\|_{L^{\Phi_p}}\||u_j-u|^q\|^0_{L^{\Phi^*_p}}.
\]
Since $\|h_j\|_{L^{\Phi_p}}$ is uniformly bounded, it is enough to show that
\[
\||u_j-u|^q\|^0_{L^{\Phi^*_p}} \to 0 \quad \text{as } j\to \infty.
\]
To establish this, we shall show that there exists constant $\lambda > 0$ such that
\[
\int_X\Phi_p^*\left(\lambda|u_j-u|^q\right)\omega^n \to 0 \quad \text{as } j\to \infty.
\]
Recall that since $u_j, u \in \mathcal E_{p_0}(X,\omega)$, by Theorem~A, there exist constants $c > 0$ and $C > 0$ independent of $j$ such that
\begin{equation}\label{e10}
\int_X \exp\left(c(-u_j-u)^{\frac{n}{n-p}}\right) \omega^n \leq C.
\end{equation}
Noting that $\frac{q}{p} < \frac{n}{n-p}$ and using (\ref{e0}) and (\ref{e10}), we get for fixed $\lambda=(\frac{c}{q+1})^p$
\begin{multline*}
\int_X\Phi_p^*\left(\lambda|u_j-u|^q\right)\omega^n \leq \lambda\int_X\exp\left(\lambda^{\frac{1}{p}}|u_j-u|^{\frac{q}{p}}\right)|u_j-u|^q\omega^n \\
\leq\lambda\int_X\exp\left(\lambda^{\frac{1}{p}}(-u_j-u)^{\frac{q}{p}}\right)|u_j-u|^q\omega^n\\
\leq \lambda \left(\int_X\exp\left((q+1)\lambda^{\frac{1}{p}}(-u_j-u)^{\frac{q}{p}}\right)\omega^n\right)^{\frac{1}{q+1}}\left(\int_X|u_j-u|^{1+q}\omega^n\right)^{\frac{q}{q+1}} \\
\leq \lambda C\left(\int_X|u_j-u|^{1+q}\omega^n\right)^{\frac{q}{q+1}} \to 0 \quad \text{as } j\to \infty.
\end{multline*}
\end{proof}

\end{document}